\titleformat{\chapter}[display]
{\normalfont\huge\bfseries}{\chaptertitlename\\thechapter}{20pt}{\Huge}
\titleformat{\subsubsection}[runin]
{\normalfont\normalsize\bfseries}{\thesubsubsection}{1em}{}
\titleformat{\paragraph}[runin]
{\normalfont\normalsize\bfseries}{\theparagraph}{1em}{}
\titleformat{\subparagraph}[runin]
{\normalfont\normalsize\bfseries}{\thesubparagraph}{1em}{}
\titlespacing*{\chapter} {0pt}{50pt}{40pt}
\titlespacing*{\section} {0pt}{3.5ex plus 1ex minus .2ex}{2.3ex plus .2ex}
\titlespacing*{\subsection} {0pt}{3.25ex plus 1ex minus .2ex}{1.5ex plus .2ex}
\titlespacing*{\subsubsection}{0pt}{3.25ex plus 1ex minus .2ex}{1.5ex plus .2ex}
\titlespacing*{\paragraph} {0pt}{3.25ex plus 1ex minus .2ex}{1em}
\titlespacing*{\subparagraph} {\parindent}{3.25ex plus 1ex minus .2ex}{1em}
\subjclass[2010]{Primary 14R15}
\newtheorem{theorem}{Theorem}[section]
\theoremstyle{definition}
\theoremstyle{remark}
\DeclareMathOperator{\Jac}{Jac}
\begin{document}
\title{The two-dimensional Jacobian Conjecture and unique factorization}

\author{Vered Moskowicz}
\address{Department of Mathematics, Bar-Ilan University, Ramat-Gan 52900, Israel.}
\email{vered.moskowicz@gmail.com}

\begin{abstract}
The two-dimensional Jacobian Conjecture says that a 
$\mathbb{C}$-algebra endomorphism 
$F:\mathbb{C}[x,y] \to \mathbb{C}[x,y]$ 
that has an invertible Jacobian is an automorphism.

We show that if a 
$\mathbb{C}$-algebra endomorphism 
$F:\mathbb{C}[x,y] \to \mathbb{C}[x,y]$ 
has an invertible Jacobian 
and if 
$v \in \mathbb{C}[F(x),F(y),x]$
is a product of prime elements of 
$\mathbb{C}[F(x),F(y),x]$,
then $F$ is an automorphism,
where $v$ is such that
$y = u/v$, where 
$u \in \mathbb{C}[F(x),F(y),x]$.
\end{abstract}

\maketitle

\section{Introduction}
The famous $n$-dimensional Jacobian Conjecture,
raised by O.H. Keller ~\cite{keller} in 1939,
says that a 
$\mathbb{C}$-algebra endomorphism 
$F:\mathbb{C}[x_1,\ldots,x_n] \to \mathbb{C}[x_1,\ldots,x_n]$ 
that has an invertible Jacobian is an automorphism.

A theorem of E. Formanek ~\cite[Theorem 2]{formanek observations}
(which generalizes a theorem of T.T. Moh ~\cite[page 151]{moh}),
says the following:
Let 
$F:\mathbb{C}[x_1,\ldots,x_n] \to \mathbb{C}[x_1,\ldots,x_n]$
be a $\mathbb{C}$-algebra endomorphism 
that satisfies
$\Jac(F(x_1),\ldots,F(x_n)) \in \mathbb{C}^*$.
Then
$\mathbb{C}(F(x_1),\ldots,F(x_n),x_1,\ldots,x_{n-1})=\mathbb{C}(x_1,\ldots,x_n)$.

We will show that if a 
$\mathbb{C}$-algebra endomorphism 
$F:\mathbb{C}[x,y] \to \mathbb{C}[x,y]$ 
has an invertible Jacobian 
and if 
$v \in \mathbb{C}[F(x),F(y),x]$
is a product of prime elements of 
$\mathbb{C}[F(x),F(y),x]$,
then $F$ is an automorphism,
where $v$ is such that
$y = u/v$, where 
$u \in \mathbb{C}[F(x),F(y),x]$.
We do not know if an analogous result for $n \geq 3$ exists.
\section{Our result}
For results about factorization in
integral domains, see, for example,
~\cite[Chapter 15]{pete}. 
We begin with the following observation:
\begin{theorem}\label{1}
Let
$F:\mathbb{C}[x,y] \to \mathbb{C}[x,y]$
be a $\mathbb{C}$-algebra endomorphism
having an invertible Jacobian.
If
$\mathbb{C}[F(x),F(y),x]$ is a UFD,
then $F$ is an automorphism.
\end{theorem}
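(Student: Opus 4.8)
\emph{Setup and Stage 1 (reduction).} The plan is to write $p=F(x)$, $q=F(y)$, $R=\mathbb{C}[p,q,x]$, and to argue in three stages. Since $\Jac(p,q)\in\mathbb{C}^*$ the polynomials $p,q$ are algebraically independent, so $F$ is injective; it remains to show $F$ is surjective, i.e. $\mathbb{C}[p,q]=\mathbb{C}[x,y]$. By the theorem of Formanek quoted above, $\operatorname{Frac}(R)=\mathbb{C}(x,y)$, so in particular $y\in\operatorname{Frac}(R)$, and since $R$ is a UFD I may write $y=u/v$ with $u,v\in R$ coprime. I claim it is enough to show that $v$ is a unit of $R$: because $R^\times=\mathbb{C}^\times$ (a unit of $R$ is a unit of $\mathbb{C}[x,y]$), this gives $y\in R$, hence $R=\mathbb{C}[x,y]$, and Stage 3 finishes.

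\emph{Stage 2 ($v$ is a unit --- the crux).} Suppose not, and let $\pi$ be a prime factor of $v$ in $R$. The localisation $R_{(\pi)}$ is a discrete valuation ring with fraction field $\mathbb{C}(x,y)$; let $\nu$ be its valuation. Then $\nu(p),\nu(q),\nu(x)\ge 0$, since these elements lie in $R$, while $\nu(y)=\nu(u)-\nu(v)=-\nu(v)<0$ because $\pi\nmid u$. I would realise $\nu$ as $\operatorname{ord}_E$ for a prime divisor $E$ on a smooth model of $\mathbb{C}(x,y)$ and work near a general point of $E$. Because $p$ and $q$ are regular there, the $2$-form $dp\wedge dq$ is regular along $E$, so, using $\Jac(p,q)\in\mathbb{C}^*$, $\operatorname{ord}_E(dx\wedge dy)=\operatorname{ord}_E(\Jac(p,q)^{-1}\,dp\wedge dq)\ge 0$. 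On the other hand, choosing local coordinates $(s,t)$ with $E=\{s=0\}$, $x$ regular and $y=w\,s^{-m}$ with $w$ a unit and $m=-\nu(y)\ge 1$, a direct differentiation shows that $dx\wedge dy$ has a pole along $E$, i.e. $\operatorname{ord}_E(dx\wedge dy)<0$ --- a contradiction. (This last computation is immediate when $x$ is non-constant along $E$; if instead $\nu$ is centred over a point of the $x$-axis, I would first translate $x$ by a constant and keep track of pole orders, or else bring in the symmetric Formanek relation $\mathbb{C}(p,q,y)=\mathbb{C}(x,y)$. Making Stage 2 fully rigorous --- this is exactly where the UFD hypothesis does its work --- is the main obstacle.) Hence $v\in\mathbb{C}^\times$ and $\mathbb{C}[p,q,x]=\mathbb{C}[x,y]$.

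\emph{Stage 3 (conclusion from $\mathbb{C}[p,q,x]=\mathbb{C}[x,y]$).} The surjection $\mathbb{C}[p,q,X]\to\mathbb{C}[x,y]$ sending the variables $p,q,X$ to $F(x),F(y),x$ has a height-one prime kernel (the target is a $2$-dimensional domain), which in the UFD $\mathbb{C}[p,q,X]$ is principal, say $(g)$ with $g$ irreducible; thus $g(p,q,x)=0$ and $n:=\deg_X g\ge 1$ (as $p,q$ are algebraically independent). Differentiating $g(p,q,x)=0$ with respect to $x$ and to $y$ and solving the resulting $2\times 2$ linear system in $g_p(p,q,x),g_q(p,q,x)$, whose determinant is $\pm\Jac(p,q)\in\mathbb{C}^\times$, yields $p_y=c\,g_q(p,q,x)/g_X(p,q,x)$ and $q_y=-c\,g_p(p,q,x)/g_X(p,q,x)$ with $c=\Jac(p,q)$, where $g_X(p,q,x)\neq 0$ because an irreducible $g$ of positive $X$-degree does not divide $g_X$. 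Since $p_y,q_y\in\mathbb{C}[x,y]$, the element $g_X(p,q,x)$ divides $g_p(p,q,x)$ and $g_q(p,q,x)$ in $\mathbb{C}[x,y]$, i.e. $g_p,g_q\in(g,g_X)$ in $\mathbb{C}[p,q,X]$. Since $\mathbb{C}[x,y]\cong\mathbb{C}[p,q,X]/(g)$ is regular, the Jacobian criterion gives $(g,g_p,g_q,g_X)=(1)$, hence $(g,g_X)=(1)$, so $g_X(p,q,x)$ is a unit of $\mathbb{C}[x,y]$, that is, $g_X(p,q,x)\in\mathbb{C}^\times$. But $g_X(p,q,x)$ is already reduced modulo $g$ and equals $n\,b_n(p,q)\,x^{n-1}+\cdots$, where $b_n\ne 0$ is the leading $X$-coefficient of $g$; since $1,x,\dots,x^{n-1}$ are $\mathbb{C}(p,q)$-linearly independent and $n\,b_n\neq 0$, this forces $n=1$. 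Then $g=b_1(p,q)X+b_0(p,q)$ with $b_1\in\mathbb{C}^\times$, so $x=-b_0(p,q)/b_1\in\mathbb{C}[p,q]$ and likewise $y\in\mathbb{C}[p,q]$; therefore $\mathbb{C}[F(x),F(y)]=\mathbb{C}[x,y]$ and $F$ is an automorphism.
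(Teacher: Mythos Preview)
Your Stage~2 has a genuine gap, and the differential-form argument cannot close it as written. The implication you need---that $\nu(x)\ge 0$ and $\nu(y)<0$ force $\operatorname{ord}_E(dx\wedge dy)<0$---is false in general: for the divisorial monomial valuation with $\nu(x)=2$, $\nu(y)=-1$ (realised by three blow-ups over the point $[0{:}1{:}0]\in\mathbb{P}^2$) one computes $\operatorname{ord}_E(dx\wedge dy)=0$. Your local computation is valid only when $\bar x$ is non-constant on $E$, so that $(s,x)$ serve as local coordinates; once $\bar x$ is constant (as in this example, where $\nu(x)>0$), the zero of $dx$ along $E$ can offset the pole of $dy$. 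Translating $x$ does not change $\nu$ or $R$, and invoking $\mathbb{C}(p,q,y)=\mathbb{C}(x,y)$ would require the UFD hypothesis for the \emph{different} ring $\mathbb{C}[p,q,y]$, so neither suggested fix works.

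The paper replaces this with a short algebraic step: after composing with a suitable automorphism one may take $F(x)$ to be monic in $y$, so $y$ is \emph{integral} over $\mathbb{C}[F(x),x]\subseteq R$; since a UFD is integrally closed and $\operatorname{Frac}(R)=\mathbb{C}(x,y)$ by Formanek, the integral extension $R\subseteq\mathbb{C}[x,y]$ is an equality. That is where the UFD hypothesis is actually used---through normality, not through a DVR localisation. Your Stage~3 is correct and amounts to a direct proof (via the Jacobian criterion) of Formanek's ``one extra generator'' theorem; the paper simply quotes that result to pass from $\mathbb{C}[F(x),F(y),x]=\mathbb{C}[x,y]$ to $\mathbb{C}[F(x),F(y)]=\mathbb{C}[x,y]$.
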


We will bring two proofs for Theorem \ref{1};
in both proofs we will use another theorem of Formanek ~\cite[Theorem 1]{formanek two notes},
see also ~\cite[Exercise 9, page 13]{essen}:
Let $k$ be a field of characteristic zero
and let
$F:k[x_1,\ldots,x_n] \to k[x_1,\ldots,x_n]$ 
be a $k$-algebra endomorphism  
that satisfies
$\Jac(F(x_1),\ldots,F(x_n)) \in k^*$.
Suppose that there is a polynomial 
$F_{n+1}$ in $k[x_1,\ldots,x_n]$ 
such that
$k[F(x_1),\ldots,F(x_n),F_{n+1}]=
k[x_1,\ldots,x_n]$.
Then 
$k[F(x_1),\ldots,F(x_n)]=k[x_1,\ldots,x_n]$
(= $F$ is a $k$-algebra automorphism of $k[x_1,\ldots,x_n]$).

\begin{proof}
\textbf{First, almost immediate, proof:}
Write 
$F(x)= a_my^m + a_{m-1}y^{m-1} + \ldots + a_1y + a_0$, 
where $a_j \in \mathbb{C}[x]$.
We can assume that $F(x)$ is monic in $y$ (and also $F(y)$ is monic in $y$, 
but we will not use this additional fact).
Indeed, multiply $F$ by an automorphism $g$,
$g(x)= x+y^N$, $g(y)= y$
for appropriate $N$, and get that 
$(gF)(x)= y^m + b_{m-1}y^{m-1} + \ldots + b_1y + b_0$, 
where $b_j \in \mathbb{C}[x]$.

Now assume that $F(x)$ is monic in $y$.
Hence, $y$ is integral over $\mathbb{C}[F(x),x]$, 
and then $y$ is integral over $\mathbb{C}[F(x),F(y),x]$,
so, 
$\mathbb{C}[F(x),F(y),x] \subseteq \mathbb{C}[F(x),F(y),x][y] = \mathbb{C}[x,y]$ 
is an integral extension.
By Formanek's theorem ~\cite[Theorem 2]{formanek observations}, we have
$\mathbb{C}(F(x),F(y),x) = \mathbb{C}(x,y)$.

By assumption, $\mathbb{C}[F(x),F(y),x]$ is a UFD, hence integrally closed (in its field of fractions)
~\cite[Theorem 15.14]{pete}, therefore,
$\mathbb{C}[F(x),F(y),x] = \mathbb{C}[x,y]$.

Finally, Formanek's theorem ~\cite[Theorem 1]{formanek two notes} implies that
$\mathbb{C}[F(x),F(y)] = \mathbb{C} [x,y]$.

\textbf{Second proof:}
The second proof can be found in ~\cite[Theorem 3.1]{vered sep}.

Recall Adjamagbo's transfer theorem ~\cite[Theorem 1.7]{adja}: 
Given commutative rings $A \subseteq B \subseteq C$ such that: $A$ is normal and Noetherian,
$B$ is isomorphic to $A[T]/hA[T]$, where $A[T]$ is the $A$-algebra of polynomials generated by one indeterminate 
$T$ and $h \in A[T]-A$, $C$ an affine $B$-algebra, $C$ is separable over $A$,
$C^*=A^*$ and the prime spectrum of $C$ is connected.
Then the following conditions are equivalent:\begin{itemize}
\item [(1)] $B$ is normal.
\item [(2)] $C$ is flat over $B$.
\item [(3)] $B$ is separable over $A$.
\item [(4)] $B$ is \'{e}tale (=unramified and flat) over $A$.
\end{itemize}
It is not difficult to check that 
$A=\mathbb{C}[F(x),F(y)]$,
$B=\mathbb{C}[F(x),F(y)][x]$
and 
$C=\mathbb{C}[x,y]$
satisfy the assumptions in Adjamagbo's transfer theorem.
Therefore, if we show that one of conditions 
$(1)-(4)$ is satisfied,
then also all the other conditions are satisfied.

By assumption $\mathbb{C}[F(x),F(y),x]$ is a UFD,
so condition $(1)$ is satisfied, hence,
in particular, condition $(4)$ is satisfied.

Recall Bass's theorem ~\cite[Proposition 1.1]{bass}: Let $k$ be an algebraically closed field of
characteristic zero. Assume that $k[x_1,x_2] \subseteq B$ 
is an affine integral domain over $k$ which is an unramified extension of $k[x_1,x_2]$.
Assume also that $B=k[x_1,x_2][b]$ for some $b \in B$. If $B^*=k^*$ then $B=k[x_1,x_2]$.

Now apply Bass's theorem to 
$\mathbb{C}[F(x),F(y)] \subseteq \mathbb{C}[F(x),F(y)][x]$
and get that
$\mathbb{C}[F(x),F(y)] = \mathbb{C}[F(x),F(y)][x]$.

Then we clearly have, 
$\mathbb{C}[F(x),F(y),y] = \mathbb{C}[x,y]$,
hence Formanek's theorem ~\cite[Theorem 1]{formanek two notes} implies that
$\mathbb{C}[F(x),F(y)] = \mathbb{C}[x,y]$.
\end{proof}

We wish to find a weaker assumption than 
$\mathbb{C}[F(x),F(y),x]$ being a UFD.

A first option is to assume that 
$\mathbb{C}[F(x),F(y),x]$ is a GCD-domain.
However, it is than immediate that 
$\mathbb{C}[F(x),F(y),x]$ is a UFD,
see ~\cite[Corollary 15.13]{pete}.

A second option is to assume that 
a specific element $v \in \mathbb{C}[F(x),F(y),x]-0$ 
has a factorization into prime elements of 
$\mathbb{C}[F(x),F(y),x]$
(if such a factorization exists,
then it is necessarily unique up to units).

Of course, every non-zero element of
$\mathbb{C}[F(x),F(y),x]$ has (at least)
one factorization into irreducible elements
of $\mathbb{C}[F(x),F(y),x]$, 
see ~\cite[Proposition 15.3]{pete},
but the problem is that those irreducible elements
need not be prime elements,
and for our proof of Theorem \ref{2}
we need a factorization of $v$ into primes.

Now, in the introduction we recalled 
Formanek's theorem ~\cite[Theorem 2]{formanek observations}.
When $n=2$ it says that if
$F:\mathbb{C}[x,y] \to \mathbb{C}[x,y]$
is a $\mathbb{C}$-algebra endomorphism 
having an invertible Jacobian,
then
$\mathbb{C}(F(x),F(y),x)=\mathbb{C}(x,y)$.
Therefore, 
for a given 
$\mathbb{C}$-algebra endomorphism
$F:\mathbb{C}[x,y] \to \mathbb{C}[x,y]$
having an invertible Jacobian,
there exist 
$u,v \in \mathbb{C}[F(x),F(y),x]-0$
such that
$y=u/v$.
(Actually, $v$ can be taken from
$\mathbb{C}[F(x),F(y)]-0$,
see ~\cite{vered},
but we will not use this fact here).

\begin{theorem}\label{2}
Let
$F:\mathbb{C}[x,y] \to \mathbb{C}[x,y]$
be a $\mathbb{C}$-algebra endomorphism
having an invertible Jacobian.
Assume that
$v \in \mathbb{C}[F(x),F(y),x]-0$
as above
is a product of prime elements of 
$\mathbb{C}[F(x),F(y),x]$.
Then $F$ is an automorphism.
\end{theorem}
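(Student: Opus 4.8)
The plan is to prove that $y\in R:=\mathbb{C}[F(x),F(y),x]$; once this is done we are finished, since then $R=\mathbb{C}[x,y]$ and Formanek's theorem~\cite[Theorem 1]{formanek two notes} (applied with $F_{3}=x$) gives $\mathbb{C}[F(x),F(y)]=\mathbb{C}[x,y]$, exactly as at the end of the first proof of Theorem~\ref{1}. Note that $R$ is an affine $\mathbb{C}$-domain with $\operatorname{Frac}(R)=\mathbb{C}(x,y)$ by Formanek~\cite[Theorem 2]{formanek observations}, so $\dim R=2$, and $R[y]=\mathbb{C}[x,y]$. Writing $v=p_{1}\cdots p_{k}$ with each $p_{i}$ a prime element of $R$, we would induct on $k$: if $k=0$ then $v$ is a unit and $y=u/v\in R$, while for the inductive step it suffices to show $p_{1}\mid u$ in $R$, for then $u=p_{1}u_{1}$ with $u_{1}\in R$ and $y=u_{1}/(p_{2}\cdots p_{k})$ has a denominator with one fewer prime factor. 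So everything reduces to the claim: \emph{if $p$ is a prime element of $R$ dividing $v$, then $p\mid u$ in $R$.}

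The one place the Jacobian hypothesis enters, beyond Formanek's theorems, is the remark that for every $c\in\mathbb{C}$ the polynomials $F(x)(c,y)$ and $F(y)(c,y)$ are not both constant in $y$: otherwise $\partial_{y}F(x)$ and $\partial_{y}F(y)$ vanish identically on the line $x=c$, and then so does $\Jac(F)$, contradicting $\Jac(F)\in\mathbb{C}^{*}$. Now fix a prime element $p$ of $R$ with $p\mid v$. Since $p$ is prime, $pR$ is a height-one prime and $R_{pR}$ is a one-dimensional Noetherian local domain with principal maximal ideal $pR_{pR}$, hence a DVR; let $w$ denote its valuation on $K=\mathbb{C}(x,y)$, so $w\ge 0$ on $R$ and $w(p)=1$. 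Suppose first that $pR\cap\mathbb{C}[x]=0$. Then every nonzero element of $\mathbb{C}[x]$ is a unit of $R_{pR}$, so $\mathbb{C}(x)\subseteq R_{pR}$ and $R_{pR}$ is a place of the rational function field $\mathbb{C}(x)(y)$ over $\mathbb{C}(x)$. If this were the place at infinity, then $w(F(x))=-\deg_{y}F(x)$ and $w(F(y))=-\deg_{y}F(y)$ would both be $\ge 0$ (as $F(x),F(y)\in R\subseteq R_{pR}$), forcing $\deg_{y}F(x)=\deg_{y}F(y)=0$ and hence $F(x),F(y)\in\mathbb{C}[x]$, against the remark above. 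So $R_{pR}$ is a finite place, at which $w(y)\ge 0$; therefore $w(u)=w(vy)=w(v)+w(y)\ge w(v)\ge 1$, and since $pR_{pR}\cap R=pR$ this gives $p\mid u$ in $R$.

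It remains to handle the case $pR\cap\mathbb{C}[x]=(x-c)$ with $c\in\mathbb{C}$. Then $x-c\in pR$, say $x-c=ps$ with $s\in R\subseteq\mathbb{C}[x,y]$; since $x-c$ is irreducible in the UFD $\mathbb{C}[x,y]$ and $p$ is not a unit there (a prime element of $R$ cannot lie in $\mathbb{C}^{*}=\mathbb{C}[x,y]^{*}$), we get $s\in\mathbb{C}^{*}$, i.e. $p$ and $x-c$ are associates in $R$. Consequently $x-c\mid v$, hence $x-c\mid u=vy$ in $\mathbb{C}[x,y]$, so $u$ maps to $0$ under the reduction $\mathbb{C}[x,y]\to\mathbb{C}[x,y]/(x-c)\mathbb{C}[x,y]\cong\mathbb{C}[y]$. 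Restricted to $R$, this reduction is a surjection $\bar{\rho}\colon R/(x-c)R\to\mathbb{C}[\,F(x)(c,y),\,F(y)(c,y)\,]\subseteq\mathbb{C}[y]$. Its target has Krull dimension $1$ (by the remark, one of its two generators is non-constant), and its source is a one-dimensional domain (because $x-c$ is a prime element of the two-dimensional domain $R$); hence $\ker\bar{\rho}$ is a prime of $R/(x-c)R$ whose quotient has the same dimension as $R/(x-c)R$, which forces $\ker\bar{\rho}=0$, so $\bar{\rho}$ is injective. Since the image of $u$ in $\mathbb{C}[y]$ is $0$, its image in $R/(x-c)R$ is $0$ as well, i.e. $(x-c)\mid u$, hence $p\mid u$ in $R$. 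This proves the claim, and with it the theorem.

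I expect the third paragraph, the case $pR\cap\mathbb{C}[x]\ne 0$, to be the crux. In Theorem~\ref{1} one finishes by using that a UFD is integrally closed; here $R$ need be neither a UFD nor integrally closed, and $\mathbb{C}[x,y]$ need not even be integral over $R$, so that route is unavailable. The substitute is the observation that a prime factor $p$ of $v$ which meets $\mathbb{C}[x]$ is forced to be a linear polynomial $x-c$, combined with the dimension count that makes $R/(x-c)R$ embed into $\mathbb{C}[y]$; the complementary case $pR\cap\mathbb{C}[x]=0$ is comparatively routine, being read off from the valuation attached to the DVR $R_{pR}$.
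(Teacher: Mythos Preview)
Your argument is correct, but it takes a genuinely different route from the paper's.

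The paper's proof is a three-line localization trick: letting $S$ be the saturation of $\{v^{i}\}$, one has $S^{-1}R=S^{-1}\mathbb{C}[x,y]$ (since inverting $v$ puts $y=u/v$ into the localization), and this is a UFD as a localization of the UFD $\mathbb{C}[x,y]$. Because $v$ is a product of primes, $S$ is a primal multiplicative set, so Nagata's theorem \cite[Theorem 15.39]{pete} gives that $R$ itself is a UFD, and Theorem~\ref{1} finishes. In particular, the paper obtains as a byproduct that the hypothesis ``$v$ is a product of primes'' is in fact equivalent to ``$R$ is a UFD''.

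You instead bypass Nagata's theorem entirely and prove $y\in R$ directly, by induction on the number of prime factors of $v$. The heart of your argument is a case split on whether the height-one prime $pR$ meets $\mathbb{C}[x]$: if not, the DVR $R_{pR}$ is a nontrivial place of $\mathbb{C}(x)(y)$ over $\mathbb{C}(x)$, and the Jacobian hypothesis rules out the infinite place, so $w(y)\ge 0$ and $p\mid u$; if $pR\cap\mathbb{C}[x]=(x-c)$, then $p$ is an associate of $x-c$ in $R$, and a dimension count identifies $(x-c)\mathbb{C}[x,y]\cap R$ with $(x-c)R$, again yielding $p\mid u$. Both cases are handled cleanly.

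What each approach buys: the paper's proof is shorter, uses only a standard commutative-algebra tool, and makes transparent that the hypothesis forces $R$ to be a UFD (so Theorem~\ref{2} is not genuinely weaker than Theorem~\ref{1}). Your proof is more self-contained---it avoids citing Nagata---and gives explicit geometric insight into how the Jacobian condition is used at each prime of $R$ lying over $\mathbb{C}[x]$; it also reaches the conclusion $R=\mathbb{C}[x,y]$ directly, rather than via ``UFD $\Rightarrow$ integrally closed'' as in Theorem~\ref{1}. Of course, once you know $y\in R$ you recover $R=\mathbb{C}[x,y]$ and hence the UFD conclusion a posteriori.
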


\begin{proof}
Write 
$v=v_1 \cdots v_m$, where
$v_j \in \mathbb{C}[F(x),F(y),x]$
are prime elements of $\mathbb{C}[F(x),F(y),x]$.
Let $S_0$ be the multiplicative set $\{v^i \}$,
and let $S$ be the saturation of $S$,
see ~\cite[page 127]{pete};
this $S$ is \textit{primal} ~\cite[page 251]{pete}
(=generated by units and by prime elements).
We claim that the localization 
$S^{-1}\mathbb{C}[F(x),F(y),x]$ is a UFD;
indeed,
$S^{-1}\mathbb{C}[F(x),F(y),x]= 
S^{-1}\mathbb{C}[x,y]$,
and
$S^{-1}\mathbb{C}[x,y]$
is a UFD as a localization of the UFD
$\mathbb{C}[x,y]$
(see ~\cite[Theorem 15.36]{pete}).
Now by a theorem of Nagata ~\cite[Theorem 15.39]{pete},
we get that $\mathbb{C}[F(x),F(y),x]$ is a UFD.
Therefore, theorem \ref{1} implies that 
$F$ is an automorphism.

\end{proof}

The proof of Theorem \ref{2} shows that,
under the assumptions of Theorem \ref{2},
$\mathbb{C}[F(x),F(y),x]$ is a UFD,
so actually our weaker condition is not weaker but equivalent.

\bibliographystyle{plain}

\end{document}